\newcommand{\eps}{\varepsilon}
\newcommand{\err}{{\rm err}}
\newcommand{\N}{\mathbb{N}}
\newcommand{\Z}{\mathbb{Z}}
\newcommand{\R}{\mathbb{R}}
\newcommand{\C}{\mathbb{C}}
\newcommand{\be}{\begin{equation}}
\newcommand{\ee}{\end{equation}}
\newtheorem{theorem}{Theorem}
\newtheorem{lemma}[theorem]{Lemma}
\newtheorem{corollary}[theorem]{Corollary}
\newtheorem{prop}[theorem]{Proposition}
\theoremstyle{definition}
\newtheorem{remark}[theorem]{Remark}
\newtheorem{alg}[theorem]{Algorithm}
\title[Tractability of sampling recovery]{Tractability of %$L_p$-approximation 
sampling recovery
on unweighted function classes}
\author{
David Krieg %$^\star$
}
\address{Institut f\"ur Analysis, 
Johannes Kepler Universit\"at Linz, Austria.}
\email{david.krieg@jku.at}
\begin{document}

\begin{abstract}
It is well-known that the problem of sampling recovery in the $L_2$-norm
on unweighted Korobov spaces (Sobolev spaces with mixed smoothness) 
as well as classical smoothness classes such as 
Hölder classes suffers from the curse of dimensionality.
We show that the problem is tractable for those classes if 
they are intersected with the Wiener algebra of functions with summable Fourier coefficients.
In fact, this is a relatively simple implication of powerful results 
from the theory of compressed sensing.
%by Rauhut and Ward [Appl.\ Comput.\ Harmon.\ Anal.~40 (2016), pp.\ 321--351].
Tractability is achieved
by the use of non-linear algorithms, 
while linear algorithms cannot do the job.
\end{abstract}

\maketitle

We consider the problem of recovering 
%a periodic function on $[0,1]^d$ 
a high-dimensional function
$f\colon [0,1]^d \to \C$
from a class $F_d$ % \subset C([0,1]^d)$ 
using algorithms of the form
\begin{equation}\label{eq:alg}
 A_n \colon F_d \to L_2([0,1]^d), \quad A_n(f) = \phi(f(x_1),\hdots,f(x_n))
\end{equation}
with sampling points $x_j \in [0,1]^d$ and a recovery map $\phi\colon \C^n \to L_2$.
The error is measured in the $L_2$-norm and in a worst-case setting, i.e.,
\[
 \err(A_n,F_d,L_2) \,:=\, \sup_{f\in F_d} \Vert f - A_n(f) \Vert_2.
\]
It is known that this approximation problem suffers from the \emph{curse of dimensionality}
for most classical function classes $F_d$,
including the smoothness classes of $k$-times 
continuously differentiable functions \cite{HNUW,HNUW2}
or Sobolev spaces of mixed smoothness \cite{HW}.
That is, there exist $C,\gamma,\eps>0$
such that the %information-based 
(information) complexity
\begin{equation}\label{def:comp}
 n(\varepsilon,F_d,L_2) \,:=\, \min\big\{n\in\N \ |\ \exists A_n \colon \err(A_n,F_d,L_2) \le \eps\big\}
% \,\ge\, C (1+\gamma)^d
\end{equation}
is bounded below by $C (1+\gamma)^d$ for all $d\in\N$.
%The function $n(\eps,F_d)$ is called the (information-based) complexity of the problem.

A classical %and common 
approach 
to make this problem tractable in high dimensions is 
to consider weighted function classes $F_d$,
assuming a decreasing importance of the input variables.
%That is, one assumes a decreasing importance of the input variables
%in a predetermined order.
%
%to assume that a decreasing importance of the input variables
%is known a priori 
%and consider classes $F_d$ that
%assign decreasing weights to the different coordinates.
%%modeling a decreasing importance of the input variables.
This approach goes back to \cite{SW}
and has gained significant popularity, 
see also Remark~\ref{rem:weighted} about weighted Korobov spaces.
%see \cite{EP} for an overview and further references.
%
Unfortunately, 
it is much harder to provide reasonable unweighted function classes
where the approximation problem is tractable.
%the problem usually becomes much harder in the unweighted case. % for unweighted function classes $F_d$.
By \emph{unweighted}, we mean that all variables are of equal importance.
Formally, one may call a function class $F_d$ unweighted
if $f\circ \pi \in F_d$ for any $f\in F_d$ and any permutation matrix~$\pi$.

It was recently discovered by Goda \cite{God} that \emph{numerical integration}
is polynomially tractable on the unweighted class
\begin{equation}\label{eq:Fd1}
 F_d^{\log} \,:=\, \bigg\{ f \in C([0,1]^d) \ \Big|\ \sum_{k\in\Z^d} |\hat f(k)| \max(1,\log \Vert k \Vert_\infty) \le 1 \bigg\}.
\end{equation}
%of functions with absolutely convergent Fourier series.
Namely, the number of sampling points that are needed to approximate the integral of any function from $F_d^{\log}$ up to an error $\varepsilon$ is bounded above by a polynomial in $\varepsilon^{-1}$ and $d$.
%In this case, one may take $p=...$ and $q=...$.
Goda refines a method of Dick who already showed in \cite{Dick} that integration
is tractable on the class of $\alpha$-Hölder 
continuous functions with absolutely convergent Fourier series.
The class $F_d^{\log}$ avoids the Hölder condition 
by strengthening the condition of absolute convergence in a very slight manner.

Those results raise the question whether the problem of $L_2$-ap\-prox\-ima\-tion on $F_d^{\log}$,
whose complexity can only be higher than the complexity of the integration problem,
is polynomially tractable as well. 
We give a positive answer.

\begin{theorem}\label{thm:main}
There is a constant $c>0$ such that for all $d\in \N$ and %, all 
$\varepsilon\in (0,1/2)$, % and all $2\le p <\infty$, 
we have
\[
 n(\varepsilon,F_d^{\log},L_2) \,\le\, c\,d\,\varepsilon^{-3} \log^3(\varepsilon^{-1}).
 % c\,d\,p^3\, \varepsilon^{-3p/2} \log^3(\varepsilon^{-1}).
\]
\end{theorem}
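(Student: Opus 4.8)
The plan is to recast the problem as sparse recovery for the trigonometric system, after a Fourier truncation. The logarithmic weight in the definition of $F_d^{\log}$ will be used in essentially one place, for two purposes at once: it makes the Fourier coefficients of every $f\in F_d^{\log}$ absolutely summable (so that $f$ is a compressible signal), and it makes the high-frequency tail of $f$ small \emph{uniformly over the class}, in both $L_2$ and $L_\infty$. Concretely, fix a cut-off $N\ge 3$, put $\Gamma:=\{k\in\Z^d:\|k\|_\infty\le N\}$ with $M:=|\Gamma|=(2N+1)^d$, and split $f=f_\Gamma+f_{\Gamma^c}$ into its low- and high-frequency parts. Since $\max(1,\log\|k\|_\infty)\ge 1$, the class lies in the unit ball of the Wiener algebra, so $\|\widehat{f_\Gamma}\|_1\le\sum_k|\hat f(k)|\le1$; and since $\|k\|_\infty>N$ forces $\log\|k\|_\infty>\log N$,
\[
 (\log N)\sum_{\|k\|_\infty>N}|\hat f(k)|\ \le\ \sum_{\|k\|_\infty>N}|\hat f(k)|\log\|k\|_\infty\ \le\ \sum_{k}|\hat f(k)|\max(1,\log\|k\|_\infty)\ \le\ 1,
\]
whence $\|f_{\Gamma^c}\|_\infty\le\sum_{\|k\|_\infty>N}|\hat f(k)|\le1/\log N$ and, since $\ell_2\le\ell_1$, also $\|f_{\Gamma^c}\|_2\le1/\log N$. (The bare Wiener-algebra condition gives no uniform tail decay at all, so the weight is genuinely needed here.)

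Next I would reduce to compressed sensing. The trigonometric monomials $(e^{2\pi i k\cdot x})_{k\in\Gamma}$ form an orthonormal system on $[0,1]^d$ bounded by $1$. Draw $x_1,\dots,x_n$ independently and uniformly on $[0,1]^d$ and let $A\in\C^{n\times M}$ have entries $n^{-1/2}e^{2\pi i k\cdot x_j}$; then the rescaled sample vector $\hat y:=(n^{-1/2}f(x_j))_{j=1}^n$ equals $A\,\widehat{f_\Gamma}+e$ with $\|e\|_2\le\|f_{\Gamma^c}\|_\infty\le 1/\log N$. Now invoke the classical restricted-isometry estimate for bounded orthonormal systems: for an absolute constant $C_0$, if $n\ge C_0\,s\log^3(s)\log M$ then with probability larger than $1/2$ one has $\delta_{2s}(A)<1/3$. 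In particular such sampling points exist; fix one such choice once and for all. For the corresponding matrix $A$, any minimizer $x^\#$ of $\|z\|_1$ over all $z\in\C^M$ with $\|Az-\hat y\|_2\le 1/\log N$ (the feasible set is nonempty, as $\widehat{f_\Gamma}$ satisfies the constraint) obeys the standard stable-and-robust $\ell_1$-recovery bound
\[
 \|\widehat{f_\Gamma}-x^\#\|_2\ \le\ \frac{c_1}{\sqrt s}\,\sigma_s(\widehat{f_\Gamma})_1+\frac{c_2}{\log N}\ \le\ \frac{c_1}{\sqrt s}+\frac{c_2}{\log N},
\]
where $\sigma_s(\cdot)_1$ is the best $s$-term approximation error in $\ell_1$ and we used $\sigma_s(\widehat{f_\Gamma})_1\le\|\widehat{f_\Gamma}\|_1\le1$.

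To finish, set $A_n(f):=\sum_{k\in\Gamma}x^\#_k\,e^{2\pi i k\cdot x}$, an algorithm of the form \eqref{eq:alg} with the fixed points above and a (non-linear) recovery map. By Parseval,
\[
 \|f-A_n(f)\|_2\ \le\ \|\widehat{f_\Gamma}-x^\#\|_2+\|f_{\Gamma^c}\|_2\ \le\ \frac{c_1}{\sqrt s}+\frac{c_2+1}{\log N}.
\]
Choosing $s$ of order $\varepsilon^{-2}$ and $N:=\lceil e^{c_3/\varepsilon}\rceil$ for a suitable absolute constant $c_3$ makes the right-hand side at most $\varepsilon$ for every $f\in F_d^{\log}$, i.e.\ $\err(A_n,F_d^{\log},L_2)\le\varepsilon$. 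Since $\log M=d\log(2N+1)\le c_4\,d\,\varepsilon^{-1}$ for $\varepsilon\in(0,1/2)$ and $\log s$ is of order $\log(\varepsilon^{-1})$, the number of points needed is $n\le C_0\,s\log^3(s)\log M\le c\,d\,\varepsilon^{-3}\log^3(\varepsilon^{-1})$, which is the claimed bound.

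The main point — and the place where the argument could appear to break down — is the choice of $N$: to drive $\|f_{\Gamma^c}\|_2$ below $\varepsilon$ one is forced to take $N$ exponentially large in $\varepsilon^{-1}$, so the ambient dimension $M=(2N+1)^d$ is doubly exponential, hopeless for any scheme whose cost depends polynomially on $M$. What rescues the construction is that compressed sensing for bounded orthonormal systems charges only a \emph{single} factor $\log M=d\log(2N+1)$, which is linear in $d$ and in $\varepsilon^{-1}$; this single logarithm of the astronomically large $M$ is precisely the mechanism that produces a sample count linear in $d$. Pinning down the exponents $3$ and $3$ in the theorem is then routine bookkeeping: the $\log^3(\varepsilon^{-1})$ comes from the $\log^3 s$ in the restricted-isometry estimate, and the $\varepsilon^{-3}$ from $s\cdot\log M$ being of order $\varepsilon^{-2}\cdot\varepsilon^{-1}$.
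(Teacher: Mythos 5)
Your proposal is correct and follows essentially the same route as the paper: truncate to the frequency cube $\|k\|_\infty\le N$ with $\log N\asymp\eps^{-1}$ (so the log-weight gives a uniform tail bound), reinterpret the samples as noisy data on the truncated part, and apply the RIP for bounded orthonormal systems with i.i.d.\ uniform points together with noise-constrained $\ell^1$-minimization, choosing $s\asymp\eps^{-2}$ so that $n\lesssim s\log^3(s)\log(\#\Lambda)\lesssim d\,\eps^{-3}\log^3(\eps^{-1})$. The only difference is presentational: the paper packages this reduction once and for all in Lemma~\ref{lem:main} and Proposition~\ref{thm:general} via \cite[Thm.\,6.1]{RW} (which also yields the $L_p$ bounds), while you carry out the same argument directly for $p=2$ using the standard stable-and-robust recovery bound.
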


More generally, the problem of sampling recovery on $F_d^{\log}$ in $L_p$ %([0,1]^d)$
is tractable for all $1\le p < \infty$, see Corollary~\ref{cor:Goda}.
In fact, we will show that 
$L_p$-approximation
is polynomially tractable for 
many classical unweighted classes of functions,
like Sobolev spaces of mixed smoothness
or Hölder continuous functions,
if it is additionally assumed that the Fourier series converges absolutely.
See Theorem~\ref{thm:general2} for a general result
and Corollaries~\ref{cor:Sob} and \ref{cor:Hoelder} 
for the corresponding examples. 
% sum of the Fourier coefficients is bounded by a constant.
Those findings are consequences of powerful results
from the theory of compressive sensing,
as will be discussed in Section~\ref{sec1}.

Interestingly, the curse of dimensionality is only mitigated due to the use of \textbf{non-linear} sampling algorithms. If one only allows linear algorithms, using linear recovery maps $\phi$ in \eqref{eq:alg}, 
the $L_2$-approximation problem on $F_d^{\log}$ suffers from the curse of dimensionality,
see Lemma~\ref{thm:lower}.
This fact also implies that the sampling recovery on $F_d^{\log}$ in $L_\infty$ %([0,1]^d)$ 
is intractable,
see Corollary~\ref{cor:uniform}.

%\begin{OP}
%We thus know that sampling recovery on $F_d^{\log}$ in $L_p([0,1]^d)$
%is tractable for all $p\in (0,2]$
%and intractable for $p=\infty$.
%Determine whether it is tractable for $p\in(2,\infty)$.
%\end{OP}

\begin{remark}
For numerical integration on the class $F_d^{\log}$, 
Goda \cite{God} obtained an upper bound of order $d^3\eps^{-3}$.
Since the upper bound from Theorem~\ref{thm:main} also holds for numerical integration,
we improve upon this bound if the dimension $d$ is larger than $\log^{3/2}(\eps^{-1})$. 
While the present paper was under review, 
Goda's upper bound on the complexity of integration 
has been further improved to $d\eps^{-3}$, see~\cite{CJ}.
\end{remark}

\begin{remark}The function class $F_d^{\log}$ defined in \eqref{eq:Fd1} 
is from the first version of the paper \cite{God} that appeared on arXiv in October 2022. 
In a later version, Goda replaced the term $\Vert k \Vert_\infty$
by $\min_{i \in {\rm supp}(k)} |k_i|$ 
and thus obtained tractability on an even larger class. 
I believe that this replacement is only possible for the integration problem 
and it will lead to intractability in the case of the approximation problem.
\end{remark}

\begin{remark} 
Other unweighted classes of functions 
where the approximation problem is tractable
are high-dimensional functions 
that show some ``low-dimensional structure'' like 
tensor products of univariate functions \cite{BDDG,KR,NR},
sums of low-dimensional functions \cite{WW},
or compositions of univariate functions
with linear functionals \cite{MUV}.
% ridge functions of the form $g(\langle\cdot,a\rangle)$
%with some $a\in\R^d$ and a univariate function $g$
%...
Moreover, it was proven in \cite{Xu}
that the approximation problem is weakly tractable
for the class of analytic functions defined on the cube with
directional derivatives of all orders bounded by 1.
For the integration problem,
we mention the star-discrepancy
as a further example \cite{HWWN}.
\end{remark}

\begin{remark}
Recently, the papers \cite{DT,JUV} also derived new bounds 
on the complexity of sampling recovery in $L_2$ using non-linear algorithms. % $\ell^1$-mini\-mi\-zation.
It is shown in \cite[Thm.\,3.2]{JUV}
that the $n$-th minimal error
is essentially bounded by the $s$-term
widths of the class $F_d$ in $L_\infty$
where it suffices to take $n$ quasi-linear in both $s$ and $d$.
The paper \cite{JUV} provides several examples of classical smoothness classes $F_d$
where non-linear sampling algorithms
have a better rate of convergence than any linear algorithm. %, see also \cite{DT}.
Here, we use instead a bound by
$s^{-1/2}$ times the $s$-term widths
of the class $F_d$ in the Wiener algebra,
see Lemma~\ref{lem:main} and Remark~\ref{rem:JUV2}.
The extra factor $s^{-1/2}$ eases the tractability analysis significantly.
It would be interesting to know whether \cite[Thm.\,3.2]{JUV} 
also leads to tractability results for the class $F_d^{\log}$ or related ones.
\end{remark}

\begin{remark}
It is remarkable that the complexity bounds in this paper
are not obtained with a sophisticated choice of interpolation nodes
but rather with i.i.d.\ uniformly distributed sampling points, see Lemma~\ref{lem:main}.
Recently, there has been much interest in the surprising
power of i.i.d.\ random information in comparison to optimal information.
I refer to the survey paper~\cite{SU}.
\end{remark}

\section{A general complexity bound}
\label{sec1}

Let $\mu$ be a probability measure on a set $D$
and let $\mathcal B = \{ b_k \mid k \in I \}$ 
be a countable orthonormal system in $L_2(D,\mu)$
such that 
\begin{equation}\label{eq:uni-bound}
 C_{\mathcal B} \,:=\, \sup_{k \in I} \Vert b_k \Vert_\infty \,<\, \infty.
\end{equation}
We say that $\mathcal B$ is a %uniformly 
bounded orthonormal system. 
%Let $F$ be a class of functions $f=\sum_{k=1}^\infty \hat f(k) b_k$ such that $\sum_{k=1}^\infty |\hat f (k)| \le 1$ for all $f\in F$.
Whenever $(c_k)_{k\in I}$ is an absolutely summable sequence of complex numbers,
the series $f = \sum_{k\in I} c_k b_k$ converges uniformly on $D$ and we have
\[
 \hat f (k) \,:=\, \int_D f(x) \overline{b_k(x)} \,{\rm d}\mu(x) \,=\, c_k, \quad k\in I.
\]
We consider the Wiener algebra
\begin{equation}\label{eq:defA}
 \mathcal A \,:=\, \mathcal A(\mathcal B) \,:=\, \left\{ f = \sum_{k\in I} c_k b_k \,\,\Big|\,\, \Vert f \Vert_{\mathcal A} := \sum_{k\in I} |c_k| < \infty \right\}
\end{equation}
%denote the Wiener algebra with respect to $\mathcal B$
and denote its unit ball by $\mathcal A_0(\mathcal B)$.
%equipped with the norm
%\[
% \Vert f \Vert_{\mathcal A} \,:=\, \sum_{k\in I} |\hat f (k)|.
%\]
For a finite index set $\Lambda\subset I$ and $f\in L_2$, we write
\[
 P_\Lambda(f) \,:=\, \sum_{k\in \Lambda} \hat f(k) b_k
\]
and denote by $\mathcal T(\Lambda)$ the span of the functions $b_k$ with $k\in \Lambda$.
Given a class $F \subset \mathcal A$,
we define the projection error
\[
 E_\Lambda^\infty(F) \,:=\, \sup_{f\in F} \Vert f - P_\Lambda f \Vert_\infty.
\]
We consider the following algorithm.

%Here,  $\Vert \cdot \Vert_{\ell^2(X)}$ denotes the %discrete $\ell^2$-semi-norm with respect to the point set $X$, i.e.,
%semi-norm given by
%\[
% \Vert f \Vert_{\ell^2(X)}^2 \,:=\, \frac{1}{\# X} \sum_{x\in X} |f(x)|^2.
%\]

\smallskip

\begin{alg}\label{alg}
Given a class $F\subset \mathcal A$, 
a finite index set $\Lambda\subset I$,
and a finite and non-empty point set $X \subset D$,
we consider the algorithm
$A_{F,\Lambda,X}$ 
which maps a function $f\in F$ to any solution of
\begin{equation}\label{eq:bp}
\begin{alignedat}{2}
 \min \Vert g \Vert_{\mathcal A} \quad &\text{subject to} \quad &&g\in \mathcal T(\Lambda)  \\
%\nolabel
&\text{and \ } && \frac{1}{\# X} \sum_{x\in X} |f(x) - g(x)|^2 \,\le\, E_\Lambda^\infty(F)^2.
\end{alignedat}
\end{equation}
\end{alg}

The algorithm is well-defined 
since the function $g = P_\Lambda(f)$
satisfies the constraints of the minimization problem.
Moreover, the output of the algorithm 
merely depends on $f$ via the data $(f(x))_{x\in X}$
and thus $A_{F,\Lambda,X}$ is a sampling algorithm
of the form \eqref{eq:alg}.

\smallskip

\begin{remark}
Algorithm~\ref{alg} is a direct translation 
of a classical algorithm from compressed sensing,
called basis pursuit denoising. Namely,
if we denote $x=(\hat g(k))_{k\in \Lambda}$,
$y=(f(x))_{x\in X}$ and $G=(b_k(x))_{k\in\Lambda, x\in X}$, 
solving \eqref{eq:bp} is equivalent to 
the $\ell^1$-minimization problem
%minimizing $\Vert x \Vert_1$
%subject to $\Vert Ax - y \Vert_2 \le \varepsilon$,
%where $y=(f(x))_{x\in X}$ and $A=(b_k(x))_{k\in\Lambda, x\in X}$.
\begin{equation}\label{eq:bp2}
 \min \Vert x \Vert_1 \quad \text{subject to} \quad \Vert Gx - y \Vert_2 \le E_\Lambda^\infty(F) \sqrt{\# X}.
\end{equation}
%where $y=(f(x))_{x\in X}$ and $A=(b_k(x))_{k\in\Lambda, x\in X}$.
%also known as basis pursuit.
The insight that $\ell^1$-minimization can be of great advantage over linear algorithms
goes back at least to \cite{CT,Don,GG}
and the problem of $\ell^2$-recovery on $\ell^1$-balls in $\R^m$. 
Different methods of computing a solution to \eqref{eq:bp2} can be found in \cite[Ch.\,15]{FR}. 
%for various methods of computing a solution to the latter. %\eqref{eq:bp2}.
\end{remark}

We now give an error bound for Algorithm~\ref{alg}.
The error is expressed in terms of
the best $s$-term widths of $F$ in $\mathcal A$, defined by
\[
 \sigma_s(F,\mathcal A) \,:=\, \sup_{f\in F}\, \inf_{\# \Lambda \le s\vphantom{|^|}}\, \sum_{k \in I \setminus \Lambda} |\hat f (k)|.
\]
We derive our error bound as
a simple implication of \cite[Thm.\,6.1]{RW}, 
where we reinterpret our data on $f$
as noisy data on $P_\Lambda f$.
The key ingredient to error bounds such as \cite[Thm.\,6.1]{RW}
is the restricted isometry property of the matrix $G=(b_k(x))_{k\in\Lambda, x\in X}$;
For $m$ random points, the matrix $m^{-1/2}G$
acts as a quasi-isometry on the set of vectors with at most $s$ non-zero entries,
where it suffices that $m$ is quasi-linear in $s$ and logarithmic in $\# \Lambda$,
see \cite[Thm.\,12.32]{FR} or \cite[Thm.\,5.2]{RW}. 
This property implies that vectors $c$ from the unit ball in $\ell^1(\Lambda)$
%the Fourier coefficients $c=(\hat f(k))_{k\in \Lambda}$ %, 
%which is contained in the unit ball of $\ell^1(\Lambda)$, 
can be recovered well from data of the form $y = Gc + e$ 
if the noise $e$ is sufficiently small, 
see \cite[Thm.\,6.12]{FR}.
In our case, we want to recover the vector $c=(\hat f(k))_{k\in \Lambda}$.
The data is given by $y=(f(x))_{x\in X}$
while $Gc =(P_\Lambda f(x))_{x\in X}$ and so
the entries of $e$ are bounded by $E_\Lambda^\infty(F)$.

\smallskip

\begin{lemma}[Compare {\cite[Thm.\,6.1]{RW}}]\label{lem:main}
There is a universal constant $c\ge 1$ such that the following holds.
Let $\mathcal B  = \{ b_k \mid k \in I \}$ be a bounded orthonormal system 
with respect to a probability measure $\mu$ and let
$F \subset \mathcal A(\mathcal B)$.
For $\gamma \in (0,1)$, $s\ge 2 C_{\mathcal B}^2$, 
and $\Lambda\subset I$, 
%put $c_\gamma = c \log(\gamma^{-1})$
%and $c_{\mathcal B}=c C_{\mathcal B}$
%and 
let $X_m$ be a set of
\[
 m \,\ge\, %C\, \log(\gamma^{-1})\, C_{\mathcal B}^2 \, \eps^{-p} \log^3(\eps^{-p}) \log(\# \Lambda)
 c\, \log(\gamma^{-1}) \, s \log^3(s) \log(\# \Lambda) %$n \ge c s \log^3(s) \log(\# I)$
\]
independent random points with distribution $\mu$.
Then, with probability at least $1-\gamma$, we have for all $2\le p \le \infty$
and $f\in F$ that
\[
 \Vert f - A_{F,\Lambda,X_m}(f) \Vert_p \,\le\, 
 c\,C_{\mathcal B}\, s^{-1/p} \sigma_s(F,\mathcal A) + c\, s^{1/2-1/p} E_\Lambda^\infty(F).
\]
%We can choose $c_\gamma = c \log(\gamma^{-1})$ with a universal constant $c>0$.
%
%In particular,
%\[
% n(C\varepsilon, F_d) % \hookrightarrow L_2, \Lambda^{\rm std}) 
% \,\le\, c_{1/2} \varepsilon^{-2} \log^3(\varepsilon^{-1}) \log(\#I).
%\]
\end{lemma}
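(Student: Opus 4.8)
The plan is to obtain Lemma~\ref{lem:main} as a translation of the compressed sensing recovery bound \cite[Thm.\,6.1]{RW} applied to the matrix $G = (b_k(x))_{k\in\Lambda,\,x\in X_m}$ and the noisy data $y = (f(x))_{x\in X_m}$, reinterpreting the samples of $f$ as perturbed samples of the projection $P_\Lambda f$. First I would fix $f \in F$ and set $c = (\hat f(k))_{k\in\Lambda} \in \C^{\#\Lambda}$, $y = (f(x))_{x\in X_m}$, and $e := y - Gc$, so that $e_x = f(x) - P_\Lambda f(x)$ and hence $\norm{e}_\infty \le E_\Lambda^\infty(F)$, giving $\norm{e}_2 \le \sqrt{m}\, E_\Lambda^\infty(F)$. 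With this bound on the noise, the constraint in \eqref{eq:bp2} with the choice $\eta = E_\Lambda^\infty(F)\sqrt{m}$ is feasible for $x = c$, so Algorithm~\ref{alg} returns a vector $x^\sharp$ with $\norm{x^\sharp}_1 \le \norm{c}_1$ and $\norm{G x^\sharp - y}_2 \le \eta$.

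The next step is to invoke the restricted isometry property. For $m \ge c\,\log(\gamma^{-1})\, s\log^3(s)\log(\#\Lambda)$ i.i.d.\ $\mu$-distributed points, the rescaled sampling matrix $m^{-1/2}G$ satisfies the RIP of order $2s$ (with a small enough universal constant, say $\delta_{2s} \le 1/3$) with probability at least $1-\gamma$; this is exactly \cite[Thm.\,12.32]{FR} / \cite[Thm.\,5.2]{RW}, where the hypothesis $s \ge 2C_{\mathcal B}^2$ ensures the $C_{\mathcal B}^2$-dependence is absorbed. On this event, \cite[Thm.\,6.1]{RW} (equivalently \cite[Thm.\,6.12]{FR}) applied to $m^{-1/2}G$, $m^{-1/2}y$, and noise level $m^{-1/2}\eta = E_\Lambda^\infty(F)$ yields
\[
 \norm{x^\sharp - c}_1 \,\le\, C_1\,\sigma_s(c)_1 + C_2\,\sqrt{s}\,E_\Lambda^\infty(F),
 \qquad
 \norm{x^\sharp - c}_2 \,\le\, \frac{C_1}{\sqrt{s}}\,\sigma_s(c)_1 + C_2\,E_\Lambda^\infty(F),
\]
with universal constants, where $\sigma_s(c)_1 = \inf_{\#S\le s}\sum_{k\notin S}|c_k|$ is the best $s$-term approximation error of $c$ in $\ell^1$. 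Since $c$ is the restriction of $(\hat f(k))_k$ to $\Lambda$, we have $\sigma_s(c)_1 \le \sigma_s(F,\mathcal A)$ for every $f\in F$.

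It remains to pass from the coefficient error to the function error in $L_p$. Writing $g = A_{F,\Lambda,X_m}(f) = \sum_{k\in\Lambda} x^\sharp_k b_k$, I split $f - g = (f - P_\Lambda f) + (P_\Lambda f - g)$. The first term has $L_\infty$-norm at most $E_\Lambda^\infty(F)$, hence $L_p$-norm at most $E_\Lambda^\infty(F)$ for every $p\le\infty$. For the second term, $P_\Lambda f - g = \sum_{k\in\Lambda}(c_k - x^\sharp_k) b_k$, so interpolating between the two extreme cases: in $L_2$, orthonormality gives $\norm{P_\Lambda f - g}_2 = \norm{c - x^\sharp}_2$; in $L_\infty$, $\norm{P_\Lambda f - g}_\infty \le C_{\mathcal B}\norm{c - x^\sharp}_1$; and for $2\le p\le\infty$ the log-convexity of $L_p$-norms (or a direct Hölder interpolation) combined with $\norm{c-x^\sharp}_2 \le s^{-1/2}\sigma_s(F,\mathcal A) + C_2 E_\Lambda^\infty(F)$ and $\norm{c-x^\sharp}_1 \le \sigma_s(F,\mathcal A) + C_2\sqrt s\, E_\Lambda^\infty(F)$ yields $\norm{P_\Lambda f - g}_p \le C_{\mathcal B}\, s^{-1/p}\sigma_s(F,\mathcal A) + C\, s^{1/2-1/p} E_\Lambda^\infty(F)$. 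Adding back the $E_\Lambda^\infty(F) \le s^{1/2-1/p}E_\Lambda^\infty(F)$ contribution from the first term (valid since $s\ge 1$ and $p\ge 2$) and collecting constants into a single universal $c$ gives the claimed bound uniformly over $f\in F$.

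The main obstacle I anticipate is bookkeeping rather than conceptual: one must be careful that the RIP event is a single event (holding simultaneously for all $f\in F$, since $G$ does not depend on $f$), that the noise-level parameter fed into the compressed sensing theorem matches the constraint in \eqref{eq:bp} after the $m^{-1/2}$ rescaling, and that the interpolation in $p$ of the $\ell^1$- and $\ell^2$-coefficient bounds produces exactly the exponents $s^{-1/p}$ and $s^{1/2-1/p}$. The dependence on $C_{\mathcal B}$ also needs tracking: it enters once through the RIP sample-complexity constant (handled by $s\ge 2C_{\mathcal B}^2$) and once through the $L_\infty$-bound $\norm{\cdot}_\infty \le C_{\mathcal B}\norm{\cdot}_1$ on $\mathcal T(\Lambda)$, which is where the factor $C_{\mathcal B}$ in the first summand comes from.
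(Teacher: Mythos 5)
Your overall strategy is the same as the paper's: view the samples of $f$ as noisy samples of $P_\Lambda f$ with noise bounded entrywise by $E_\Lambda^\infty(F)$, note that the true coefficient vector is feasible for the constrained $\ell^1$-minimization, invoke the Rauhut--Ward recovery theorem on a single high-probability event (independent of $f$), and interpolate the resulting $L_2$- and $L_\infty$-bounds via $\Vert h\Vert_p\le \Vert h\Vert_2^{2/p}\Vert h\Vert_\infty^{1-2/p}$ to get the $L_p$-estimate. All of that matches the paper's proof.

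There is, however, a genuine gap in how you handle $C_{\mathcal B}$ when $C_{\mathcal B}>1$. You run the \emph{unweighted} route (RIP via \cite[Thm.\,12.32]{FR} plus \cite[Thm.\,6.12]{FR}), asserting that the hypothesis $s\ge 2C_{\mathcal B}^2$ ``absorbs'' the $C_{\mathcal B}^2$-dependence. It does not: the unweighted RIP for a bounded orthonormal system with constant $K=C_{\mathcal B}$ requires $m\gtrsim C_{\mathcal B}^2\, s\log^3(s)\log(\#\Lambda)$ samples, and the lemma's hypothesis on $m$ contains no factor $C_{\mathcal B}^2$; a lower bound on $s$ cannot compensate for a missing factor in $m$. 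With the stated $m$ you would only get RIP of order roughly $s/C_{\mathcal B}^2$, which yields a bound in terms of $\sigma_{\lfloor s/C_{\mathcal B}^2\rfloor}$ rather than $\sigma_s$, and your $\ell^1\to L_\infty$ step $\Vert\cdot\Vert_\infty\le C_{\mathcal B}\Vert\cdot\Vert_1$ additionally puts a factor $C_{\mathcal B}$ on the noise term, which the lemma's conclusion does not have. The cure is exactly the paper's choice: use the \emph{weighted} framework of \cite{RW} with constant weights $\omega_k=C_{\mathcal B}$, where $s\ge 2\Vert\omega\Vert_\infty^2=2C_{\mathcal B}^2$ is precisely the weighted-sparsity condition, the sample complexity is $m\gtrsim s\log^3(s)\log(\#\Lambda)$ with no extra $C_{\mathcal B}^2$, the minimizer of the weighted $\ell^1$-norm coincides with that of $\Vert\cdot\Vert_{\mathcal A}$ since the weights are constant, and the weighted best $s$-term error is controlled by $C_{\mathcal B}\sigma_s(F,\mathcal A)$, producing the factor $C_{\mathcal B}$ only in front of $\sigma_s(F,\mathcal A)$. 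In the special case $C_{\mathcal B}=1$ (e.g.\ the Fourier system) your argument is complete and coincides with the paper's; for the lemma as stated you need the weighted versions of both the RIP and the recovery theorem.
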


\begin{proof}
We use \cite[Thm.\,6.1]{RW}
with $b_j$ instead of $\phi_j$ and %, $m=n$,
$\omega_j = C_{\mathcal B}$.
We choose $c_0$ as the maximum of all constants appearing in \cite[Thm.\,6.1]{RW}
and $c=c_0+1$.
%\[
% c^{-1} \,\ge\, \max\big\{ 2 ,\, 2 (c_2 + d_2)^{2/p} ((c_1  + 2d_2)C_{\mathcal B})^{(p-2)/p} \big\}
%\]
Then $X_m = \{ t_1, \hdots, t_m\}$ 
satisfies the conclusion of \cite[Thm.\,6.1]{RW} with probability $1-\gamma$.
Given $g\in F$, we apply this conclusion to the function $f=P_\Lambda g$
and samples $y_\ell=f(t_\ell) + \xi_\ell$ with
$\xi_\ell = g(t_\ell) - f(t_\ell)$ 
and hence $\Vert \xi\Vert_2 \le E_\Lambda^\infty(F) \sqrt m =: \eta$.
Then the function $f^\#$ defined in \cite[Thm.\,6.1]{RW} equals the function 
$A_{F,\Lambda,X_m}(g)$.
By the triangle inequality and Jensen's inequality, we have
\[
 \Vert g - A_{F,\Lambda,X_m}(g) \Vert_p
 \le E_\Lambda^\infty(F) + \Vert f - f^\# \Vert_p.
\]
As $\sigma_s(f)_{\omega,1}$ as defined in \cite{RW}
is bounded by $C_{\mathcal B} \sigma_s(F,\mathcal A)$,
we get
%$\sigma_s(f)_{\omega,1}/ \sqrt s \le \eps^{p/2}$.
%Thus,
\begin{align*}
 &\Vert f - f^\# \Vert_2 \,\le\, c_0 C_{\mathcal B}\, s^{-1/2} \sigma_s(F,\mathcal A) + c_0 E_\Lambda^\infty(F) \,=:\, R,\\
 &\Vert f - f^\# \Vert_\infty \,\le\, c_0 C_{\mathcal B}\, \sigma_s(F,\mathcal A) + c_0 s^{1/2} E_\Lambda^\infty(F) \,=\,R s^{1/2}.
\end{align*}
It only remains to apply $\Vert h \Vert_p^p \le \Vert h \Vert_2^2 \cdot \Vert h \Vert_\infty^{p-2}$
for $h=f - f^\#$.
\end{proof}

%In order to achieve an error of order $\eps$,
%we have to choose $s$ in Lemma~\ref{lem:main} % (and hence the number of samples)
%large enough for $C_{\mathcal B}\, s^{-1/p} \sigma_s(F,\mathcal A) \le \eps$ to hold.
%For classes $F$ in the unit ball of $\mathcal A$,
%we clearly have $\sigma_s(F,\mathcal A) \le 1$
%and may choose $s$ (and hence the number of samples) 
%of order $\eps^{-p}$.
%In fact, this is already enough for our tractability analysis.
%We thus get the following complexity bound in terms of the quantity

This leads to the following complexity bound.
Recall the definition of the (information) complexity
of sampling recovery in $L_2$
from \eqref{def:comp}
which is defined analogously for sampling recovery in $L_p$.
We also define the minimal size of an index set needed for a projection error $\eps>0$, i.e.,
\begin{equation}\label{eq:defN}
% N_1(\eps,F) \,&:=\, \min \big\{ s\in \N \mid s^{-1/2} \sigma_s(F,\mathcal A) \le C_{\mathcal B}^{-1} \eps\big\}, \\
 N_{\mathcal B}^\infty(\eps,F) \,:=\, \min \{ \# \Lambda \mid   E_\Lambda^\infty(F) \le \eps\}.
\end{equation}

\smallskip

%The complexity of sampling recovery in $L_p$
%with algorithms $A_n(f)=\phi(f(x_1),\dots,f(x_n))$
%and error demand $\eps>0$ is defined as
%%For error demand $\eps>0$ and some $1\le p \le \infty$,
%%we define the complexity
%\[
% n(\varepsilon,F,L_p) \,:=\, 
% \min\bigg\{n\in\N \,\,|\,  
% \inf_{\substack{x_1,\dots,x_n\in D\\ \phi\colon \C^n\to L_p}}\, 
% \sup_{f\in F}\, \Vert f - A_n(f) \Vert_p \,\le\, \eps\bigg\}.
%\]

\begin{prop}\label{thm:general}
There is a universal constant $C\ge 2$ such that,
for any bounded orthonormal system $\mathcal B$, any $F\subset \mathcal A_0(\mathcal B)$,
all $1 \le p < \infty$ and $\eps\in(0,1)$, % (2 C_{\mathcal B}^2)^{-1/p}$,
we have
\[
 n(\eps, F,L_p) % \hookrightarrow L_2, \Lambda^{\rm std}) 
 \,\le\, C\, \tilde\eps^{-r} \log^3(\tilde\eps^{-r})\, \log N_{\mathcal B}^\infty\big(\tilde\eps^{r/2},F\big)
% c \, N_1(\eps,F)\, \log^3(N_1(\eps,F))\, \log(N_{\mathcal B}^\infty(\eps,F)).
\]
with $\tilde\eps = \eps/ (CC_{\mathcal B})$ and $r=\max\{p,2\}$.
%\[
% n(c\,C_{\mathcal B}\,\eps, F,L_p) % \hookrightarrow L_2, \Lambda^{\rm std}) 
% \,\le\, C\, \eps^{-p} \log^3(\eps^{-p})\, \log N_{\mathcal B}^\infty(\eps^{p/2},F).
%% c \, N_1(\eps,F)\, \log^3(N_1(\eps,F))\, \log(N_{\mathcal B}^\infty(\eps,F)).
%\]
\end{prop}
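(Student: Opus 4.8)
The plan is to derive Proposition~\ref{thm:general} directly from Lemma~\ref{lem:main} by choosing the parameters $s$, $\Lambda$, and $m$ appropriately as functions of $\eps$. First I would set $r=\max\{p,2\}$, observe that $\Vert\cdot\Vert_p\le\Vert\cdot\Vert_r$ since $\mu$ is a probability measure, so it suffices to bound the error in $L_r$, and that for $F\subset\mathcal A_0(\mathcal B)$ we have the trivial bound $\sigma_s(F,\mathcal A)\le 1$ for all $s$. The idea is then to pick $s$ so that the first term $c\,C_{\mathcal B}\,s^{-1/r}\sigma_s(F,\mathcal A)\le c\,C_{\mathcal B}\,s^{-1/r}$ is at most $\eps/2$, which forces $s$ to be of order $(C_{\mathcal B}/\eps)^{r}$ up to constants; concretely one takes $s=\lceil(2cC_{\mathcal B}/\eps)^{r}\rceil$, and one should check this is $\ge 2C_{\mathcal B}^2$ (true for $\eps$ small, and the constant $C$ can be enlarged to absorb the remaining range of $\eps\in(0,1)$, or one notes $r\ge 2$ makes $s\gtrsim C_{\mathcal B}^2$ automatically once the constant is large enough).

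Next I would handle the second term $c\,s^{1/2-1/r}E_\Lambda^\infty(F)$. The plan is to choose $\Lambda$ to be a minimizing index set for the projection error at level $\delta:=\tilde\eps^{\,r/2}$ for a suitable normalized $\tilde\eps$, i.e.\ $\#\Lambda=N_{\mathcal B}^\infty(\delta,F)$ and $E_\Lambda^\infty(F)\le\delta$. Then $c\,s^{1/2-1/r}E_\Lambda^\infty(F)\le c\,s^{1/2-1/r}\delta$, and since $s\asymp(C_{\mathcal B}/\eps)^{r}$ we get $s^{1/2-1/r}\asymp(C_{\mathcal B}/\eps)^{r/2-1}$, so to make this product at most $\eps/2$ we need $\delta\lesssim\eps\cdot(\eps/C_{\mathcal B})^{r/2-1}=(\eps/C_{\mathcal B})^{r/2}\cdot C_{\mathcal B}$, which matches the stated choice $\delta=\tilde\eps^{\,r/2}$ with $\tilde\eps=\eps/(CC_{\mathcal B})$ once the universal constant $C$ is chosen large enough to absorb the factor $c$ and the rounding. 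So both error terms are $\le\eps/2$, giving total error $\le\eps$ with probability at least $1-\gamma$.

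Finally I would convert this into a complexity bound. Since the error bound in Lemma~\ref{lem:main} holds with probability at least $1-\gamma$ over the random points, taking any $\gamma\in(0,1)$ (say $\gamma=1/2$) shows there \emph{exists} a point set $X_m$ of the stated cardinality for which $A_{F,\Lambda,X_m}$ achieves error $\le\eps$; hence $n(\eps,F,L_p)\le m$. It then remains to plug the chosen $s$ and $\#\Lambda$ into $m\ge c\log(\gamma^{-1})\,s\log^3(s)\log(\#\Lambda)$: with $\gamma=1/2$ this is $m\asymp s\log^3(s)\log N_{\mathcal B}^\infty(\delta,F)$, and since $s\asymp\tilde\eps^{-r}$ we get $s\log^3 s\asymp\tilde\eps^{-r}\log^3(\tilde\eps^{-r})$, yielding exactly the claimed bound after adjusting $C$. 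I expect the only real bookkeeping obstacle to be the careful tracking of how the various universal constants ($c$ from Lemma~\ref{lem:main}, the factor $2c C_{\mathcal B}$ inside $s$, the rounding from the ceiling, and the requirement $s\ge 2C_{\mathcal B}^2$) combine into a single $C\ge 2$, together with checking that the argument covers the full range $\eps\in(0,1)$ rather than just small $\eps$ — a standard issue that is resolved by enlarging $C$, since for $\eps$ bounded away from $0$ the right-hand side is bounded below by a constant while $n(\eps,F,L_p)$ could still be large, so one may need the crude observation that $N_{\mathcal B}^\infty(\tilde\eps^{r/2},F)\ge 1$ keeps the logarithm nonnegative and the parameters well-defined. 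None of the individual steps is deep; the content is entirely in the already-established Lemma~\ref{lem:main}.
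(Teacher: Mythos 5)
Your proposal is correct and follows essentially the same route as the paper's proof: reduce $p<2$ to $p=2$ (i.e.\ to $L_r$ with $r=\max\{p,2\}$), take $\Lambda$ realizing $E_\Lambda^\infty(F)\le\tilde\eps^{r/2}$ with $\#\Lambda\le N_{\mathcal B}^\infty(\tilde\eps^{r/2},F)$, set $s\asymp\tilde\eps^{-r}$ (which automatically satisfies $s\ge 2C_{\mathcal B}^2$ since $\eps<1$ and $C\ge2$), use $\sigma_s(F,\mathcal A)\le1$, fix $\gamma$ constant, and read off the cardinality bound from Lemma~\ref{lem:main}. The bookkeeping you flag is exactly what the paper handles by setting $C=2c$ and $\gamma=e^{-1}$, so no further comment is needed.
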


\begin{proof}
The case $p<2$ follows from the case $p=2$,
so let $p\ge 2$.
Put $C=2c$ with $c$ from Lemma~\ref{lem:main}.
We fix $\gamma={\rm e}^{-1}$ and choose $\Lambda \subset I$ with $\# \Lambda \le N_{\mathcal B}^\infty(\tilde\eps^{p/2},F)$
and $E_\Lambda^\infty(F)\le \tilde\eps^{p/2}$.
Moreover, we put $s= \lceil \tilde\eps^{-p} \rceil$ % \min\{\lceil 2C_{\mathcal B}^2\rceil,\lceil \eps^{-p} \rceil\}$.
so that $s\ge 2 C_{\mathcal B}^2$.
Observing that
\[
 \sigma_s(F,\mathcal A) \,\le\, 1,
\]
Lemma~\ref{lem:main} yields the statement.
\end{proof}

%The complexity of recovery in $L_p$ with $p<2$
%is bounded above by the complexity for $p=2$.
Proposition~\ref{thm:general} leads to our main result on the tractability of $L_p$-approximation.
Here, given classes $F_d\subset\mathcal A_d$ for each $d\in\N$,
%where $d\in\N$ usually stands for the dimension of the domain,
we say that sampling recovery on $F_d$ in $L_p$ is polynomially tractable if
\[
\exists C,q,r\ge 0\colon \forall d\in\N \ \forall \eps>0 \colon \quad
 n(\varepsilon,F_d,L_p) \,\le\, C d^q \eps^{-r}.
\]
Recall the definition of $C_{\mathcal B}$, $\mathcal A_0(\mathcal B)$ and $N_{\mathcal B}^\infty(\eps,F)$ 
from \eqref{eq:uni-bound},
\eqref{eq:defA}
and \eqref{eq:defN}.

\smallskip

\begin{theorem}\label{thm:general2}
For every $d\in\N$, let
$\mathcal B_d$ be a bounded orthonormal system
and $F_d \subset \mathcal A_0(\mathcal B_d)$.
Assume that there are positive constants $c_1,c_2,\alpha,\beta,$ and $\gamma$
such that
\[
 C_{\mathcal B_d} \,\le\, c_1 d^\alpha
 \qquad\text{and}\qquad
 N_{\mathcal B}^\infty(\eps,F_d) \,\le\, \exp(c_2 d^\beta \eps^{-\gamma})
\]
for all $\eps>0$ and $d\in\N$.
%with 
%\[
%\exists c_1,\alpha\ge 0\colon \forall d\in\N \colon \quad
%% \sup_{b\in\mathcal B_d} \Vert b\Vert_\infty 
% C_{\mathcal B_d} \,\le\, c_1 d^\alpha
%\]
%%the uniform bound $C_{\mathcal B_d}$ from \eqref{eq:uni-bound} grows at most polynomial in $d$,
%and classes 
%%\[
%% F_d \,\subset\, \left\{ f \in \mathcal A(\mathcal B_d) \colon  \Vert f \Vert_{\mathcal A(\mathcal B_d)} \le 1 \right\},
%%\]
%$F_d \subset \mathcal A(\mathcal B_d)$,
%assume that 
%\[
%\exists c_2,\beta,\gamma\ge 0\colon \forall d\in\N \ \forall \eps>0 \colon \quad
% N_{\mathcal B}^\infty(\eps,F_d) \,\le\, \exp(c_2 d^\beta \eps^{-\gamma}),
%\]
%
%where $C_{\mathcal B_d}$ and $N_{\mathcal B}^\infty(\eps,F_d)$ are defined in ...
Then the problem of sampling recovery on $F_d$ in $L_p$
is polynomially tractable for all $1\le p<\infty$.
\end{theorem}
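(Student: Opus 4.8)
The plan is to invoke Proposition~\ref{thm:general} for each class $F_d$ and simply substitute the two hypothesised growth bounds, checking that what comes out is of the form $Cd^q\eps^{-r}$ demanded by the definition of polynomial tractability. Since $\mu$ is a probability measure, $\Vert f\Vert_p\le\Vert f\Vert_2$ for $1\le p<2$, so $n(\eps,F_d,L_p)\le n(\eps,F_d,L_2)$; hence it is enough to treat $p\ge2$, in which case $r:=\max\{p,2\}=p$.

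For $\eps\in(0,1/2)$, Proposition~\ref{thm:general} gives
\[
 n(\eps,F_d,L_p)\,\le\, C\,\tilde\eps^{-p}\log^3(\tilde\eps^{-p})\,\log N_{\mathcal B_d}^\infty\big(\tilde\eps^{p/2},F_d\big),\qquad \tilde\eps=\frac{\eps}{C\,C_{\mathcal B_d}}.
\]
I would first record that $C\ge2$ and $C_{\mathcal B_d}\ge1$ (the latter because each $b_k$ has $1=\Vert b_k\Vert_2\le\Vert b_k\Vert_\infty$), so $\tilde\eps\le\eps/2<1/4$ and in particular $\tilde\eps^{-p}>1$; and that the first hypothesis gives $\tilde\eps^{-1}=C\,C_{\mathcal B_d}\,\eps^{-1}\le Cc_1\,d^\alpha\eps^{-1}$, which is polynomial in $d$ and $\eps^{-1}$. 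Feeding this into the three factors: $\tilde\eps^{-p}$ is polynomial in $(d,\eps^{-1})$; the second hypothesis gives $\log N_{\mathcal B_d}^\infty(\tilde\eps^{p/2},F_d)\le c_2 d^\beta\tilde\eps^{-p\gamma/2}$, again polynomial in $(d,\eps^{-1})$; and the polylogarithmic factor is absorbed via the elementary bound $\log^3 t\le c_3\,t$, valid for all $t\ge1$. Multiplying the three polynomial estimates produces $n(\eps,F_d,L_p)\le C' d^{q'}\eps^{-r'}$ for $\eps\in(0,1/2)$, with $C',q',r'$ depending only on $c_1,c_2,\alpha,\beta,\gamma,p$ (for instance $r'=2p+p\gamma/2$ and $q'=2\alpha p+\beta+\alpha p\gamma/2$).

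It remains to cover all $\eps>0$. Since $F_d\subset\mathcal A_0(\mathcal B_d)$, every $f\in F_d$ satisfies $\Vert f\Vert_p\le\Vert f\Vert_\infty\le\sum_k|\hat f(k)|\Vert b_k\Vert_\infty\le C_{\mathcal B_d}\le c_1 d^\alpha$, so the zero algorithm already has error $\le c_1 d^\alpha$; hence $n(\eps,F_d,L_p)=0$ for $\eps\ge c_1 d^\alpha$, and for $\eps\in[1/2,c_1 d^\alpha)$ the monotonicity of $n(\cdot,F_d,L_p)$ together with the bound at $\eps=1/2$ and $\eps^{-r'}\ge(c_1 d^\alpha)^{-r'}$ again yields an estimate of the right shape, at the cost of enlarging $q'$. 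Taking the maximum of the constants and exponents over the three ranges of $\eps$ gives polynomial tractability for every $1\le p<\infty$.

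I do not anticipate a genuine obstacle here: all of the substance is already contained in Proposition~\ref{thm:general}, hence ultimately in Lemma~\ref{lem:main} and the compressed-sensing input behind it. The only delicate points are bookkeeping ones — making sure the $\log^3$ factor can be absorbed into a polynomial, which needs $\tilde\eps^{-p}$ bounded away from $1$ (true because $\tilde\eps<\eps/2$), and covering the range $\eps\ge1$, which Proposition~\ref{thm:general} does not address but which is immediate from the inclusion $F_d\subset\mathcal A_0(\mathcal B_d)$.
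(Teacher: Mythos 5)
Your proposal is correct and follows exactly the route the paper intends: the paper states Theorem~\ref{thm:general2} as an immediate consequence of Proposition~\ref{thm:general} without writing out a proof, and your argument is precisely that substitution, with the logarithmic factor absorbed polynomially and the exponents computed correctly. Your extra bookkeeping for the range $\eps\ge 1/2$ (monotonicity of $n(\cdot,F_d,L_p)$ plus the trivial bound $\Vert f\Vert_\infty\le C_{\mathcal B_d}$ from $F_d\subset\mathcal A_0(\mathcal B_d)$) is a valid and slightly more careful treatment of the ``for all $\eps>0$'' requirement in the tractability definition.
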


%\begin{proof}
%We only need to observe that 
%\[
% N_1(\varepsilon,F_d) \,\le\, \lceil c_1^2 d^{2\alpha} \varepsilon^{-2} \rceil .
%\]
%to obtain from Proposition~\ref{thm:general} that
%\[
% ...
%\]
%\end{proof}

%In fact, it suffices to choose $n \ge c s \log^3(s) \log(\# I)$,
%where $s$ is minimal such that $s^{-1/2} \sigma_s(F,\mathcal A) \le \varepsilon$.
%Here
%\[
% \sigma_s(F,\mathcal A)
%\]
%denotes the best $s$-term width of $F$ in $\mathcal A$.

That is, sampling recovery in $L_p$ on classes with absolutely convergent
(generalized) Fourier series is tractable
% we get tractability for $L_p$-approximation
even if the number of %(generalized) 
Fourier coefficients
needed for an $\varepsilon$-approximation of functions $f\in F_d$ in the uniform norm
grows super-exponentially in $d$ and $\eps^{-1}$,
as long as the growth is not double-exponential.

%Note that, if $F$ is a class of functions whose absolute sum of coefficients is bounded by one,
%we always have 
%\[
% N_1(\varepsilon,F) \,\le\, \lceil C_{\mathcal B}^2 \varepsilon^{-2} \rceil .
%\]
%Thus, assuming the uniform bound $C_{\mathcal B}$ is dimension-independent, 
%we obtain polynomial tractability for such a class %with absolutely summable Fourier series
%whenever the logarithm of $N_{\mathcal B}^\infty(\varepsilon,F)$ is bounded 
%by a polynomial in $\varepsilon^{-1}$ and the dimension $d$.
%This is a very weak condition.
%For instance, it is not even a problem if we need $(d\varepsilon^{-1})^d$ coefficients
%for an $\varepsilon$-approximation of functions $f\in F$ in the uniform norm.
%We will examine this further for the Fourier system in the next section.

\begin{remark}[Rate of convergence]\label{rem:JUV2}
For our tractability analysis, it was enough to observe that $\sigma_n(F,\mathcal A) \le 1$
for all $F\subset \mathcal A_0$.
In fact, it would not help for the classes considered in this paper
to take the decay of the widths $\sigma_n(F,\mathcal A)$ into account
as $n$ would have to be exponentially large in $d$ if we want widths significantly smaller than one.
This is different
if one is interested in studying the rate of convergence of the $n$th minimal error
\[
 \err(n,F_d,L_p) \,:=\, \inf_{\substack{x_1,\dots,x_n\in D\\ \phi\colon \C^n\to L_p}}\, 
 \sup_{f\in F}\, \Vert f - A_n(f) \Vert_p.
\]
%one should take the decay of $\sigma_n(F,\mathcal A) \le 1$
Often (e.g., for Sobolev classes of mixed smoothness) one
can choose the size of $\Lambda$ polynomial in $n$
in order to obtain from Lemma~\ref{lem:main} that
\[
 \err(C n \log^4 n, F,L_p) \,\lesssim\, n^{-1/p} \,\sigma_n(F,\mathcal A).
\]
In comparison, the paper \cite{JUV} recently revealed estimates of the form
\[
 \err(C n \log^4 n, F,L_2) \,\lesssim\, \sigma_n(F,\mathcal B)_{L_\infty}
\]
by means of the best $n$-term widths in $L_\infty$ instead of $\mathcal A$,
%which was recently discovered to hold for many examples in \cite{JUV}
where the power of the logarithm can be reduced to 3 for the trigonometric system.
%Note that the power of the logarithm in the oversampling factor can be reduced by one
%in case of the trigonometric system, see \cite[Theorem~3.2]{JUV}
%%Also note that, for the trignometric system,
%%the power of the logarithm in the oversampling factor can be reduced by one,
%%see \cite{HR}.
%The authors of \cite{JUV} used \eqref{...}
%to obtain new asymptotics for the sampling numbers of 
%a number of smoothness classes $F_d$
\end{remark}

\section{Results for specific classes} %{Tractability for the Fourier system}
\label{sec:UBFourier}

We now consider the Fourier system
\[
 \mathcal B_d \,:=\, \{ b_k = e^{2\pi i \langle k, \cdot \rangle} \mid k \in \Z^d\}
\]
that is orthonormal in $L_2([0,1]^d)$ with the Lebesgue measure 
and satisfies $C_{\mathcal B_d} = 1$.
We obtain the classical Wiener algebra $\mathcal A_d := \mathcal A(\mathcal B_d)$
of periodic functions with absolutely convergent Fourier series.
%with unit ball
%\[
% B_{\mathcal A_d} \,=\, \Big\{ f \in \mathcal A_d \mid \sum_{k\in \Z^d} |\hat f (k) | \,\le\, 1 \Big\}.
%\]
%
%\medskip
%
%
%\begin{corollary}
%For all $d\in \N$, let $F_d$ be a class of continuous functions on $[0,1]^d$
%whose Fourier coefficients have absolute sum at most one.
%If
%\[
%\exists C,p,q\ge 0\colon \forall d\in\N \ \forall \eps>0 \colon \quad
% N_{\mathcal B}^\infty(\eps,F_d) \,\le\, \exp(C d^p \eps^{-q})
%\]
%then $L_2$-approximation on $F_d$ with standard information is polynomially tractable.
%\end{corollary}
%
We consider three examples,
starting with the class $F_d^{\log}$ from the introduction. % \eqref{eq:Fd1}.
The following corollary also proves Theorem~\ref{thm:main}.

\begin{corollary}\label{cor:Goda}
Let $2\le p <\infty$.
There is a constant $c>0$ such that for all
$d\ge 2$ and $\varepsilon\in (0,1/2)$, we have
\[
 n(\varepsilon,F_d^{\log},L_p) \,\le\, c\,d\, \varepsilon^{-3p/2} \log^3(\varepsilon^{-p}).
\]
\end{corollary}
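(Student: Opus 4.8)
The plan is to deduce Corollary~\ref{cor:Goda} directly from Proposition~\ref{thm:general} applied to the Fourier system $\mathcal{B}_d$, for which $C_{\mathcal{B}_d}=1$. The only quantity left to estimate is $N_{\mathcal{B}_d}^\infty(\delta,F_d^{\log})$, the minimal size of an index set $\Lambda\subset\Z^d$ achieving a uniform projection error at most $\delta$ on $F_d^{\log}$. First I would observe that for $f\in F_d^{\log}$ and any $\Lambda$ we have $\Vert f-P_\Lambda f\Vert_\infty\le\sum_{k\notin\Lambda}|\hat f(k)|$, and if we choose $\Lambda=\Lambda_R:=\{k\in\Z^d:\max(1,\log\Vert k\Vert_\infty)\le R\}$, i.e. $\Lambda_R=\{k:\Vert k\Vert_\infty\le e^R\}$ for $R\ge 1$, then for every tail index $k\notin\Lambda_R$ the weight $\max(1,\log\Vert k\Vert_\infty)>R$, so
\[
 \sum_{k\notin\Lambda_R}|\hat f(k)|\,\le\,\frac1R\sum_{k\notin\Lambda_R}|\hat f(k)|\max(1,\log\Vert k\Vert_\infty)\,\le\,\frac1R,
\]
using the defining constraint of $F_d^{\log}$. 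Hence taking $R=\lceil\delta^{-1}\rceil$ gives $E_{\Lambda_R}^\infty(F_d^{\log})\le\delta$, and $\#\Lambda_R=(2\lfloor e^R\rfloor+1)^d\le(3e^R)^d=\exp(d(R+\log 3))\le\exp(Cd\,\delta^{-1})$ for a suitable absolute constant. Therefore $N_{\mathcal{B}_d}^\infty(\delta,F_d^{\log})\le\exp(Cd\delta^{-1})$, so $\log N_{\mathcal{B}_d}^\infty(\delta,F_d^{\log})\le Cd\delta^{-1}$.

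Next I would feed this into Proposition~\ref{thm:general}. With $C_{\mathcal{B}_d}=1$ we have $\tilde\eps=\eps/C$ (a constant multiple of $\eps$), $r=\max\{p,2\}=p$ since $p\ge 2$, and the bound becomes
\[
 n(\eps,F_d^{\log},L_p)\,\le\,C\,\tilde\eps^{-p}\log^3(\tilde\eps^{-p})\,\log N_{\mathcal{B}_d}^\infty(\tilde\eps^{p/2},F_d^{\log}).
\]
Plugging in $\delta=\tilde\eps^{p/2}$ into the estimate just derived yields $\log N_{\mathcal{B}_d}^\infty(\tilde\eps^{p/2},F_d^{\log})\le Cd\,\tilde\eps^{-p/2}$. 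Combining, and absorbing the constant powers of $C$ into a new constant $c$, gives
\[
 n(\eps,F_d^{\log},L_p)\,\le\,c\,d\,\eps^{-p}\,\eps^{-p/2}\log^3(\eps^{-p})\,=\,c\,d\,\eps^{-3p/2}\log^3(\eps^{-p}),
\]
which is exactly the claimed bound; the restriction $\eps<1/2$ just ensures $\tilde\eps<1$ so that the logarithms are positive and $R\ge1$.

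There is no serious obstacle here — the corollary is essentially a substitution. The one point requiring a little care is the counting step $\#\Lambda_R\le\exp(Cd\delta^{-1})$: one must make sure that the additive $\log 3$ (or whatever constant arises from the $2\lfloor\cdot\rfloor+1$ cardinality of a discrete box) is dominated by $R\asymp\delta^{-1}$, which is fine for $\delta<1$, and that the hypotheses of Theorem~\ref{thm:general2} are met with $\alpha=0$, $\beta=1$, $\gamma=1$ if one prefers to invoke the general theorem rather than Proposition~\ref{thm:general} directly. I would also double-check that the choice $\Lambda=\Lambda_R$ (a full box) is admissible — it is, since the projection error only needs to be small for the \emph{worst} $f$, and the tail-weight argument above is uniform over $F_d^{\log}$. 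Finally, specializing to $p=2$ recovers Theorem~\ref{thm:main} with the stated exponent $\eps^{-3}\log^3(\eps^{-1})$.
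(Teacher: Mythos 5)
Your proposal is correct and follows essentially the same route as the paper: bound the uniform projection error onto a cube $\{k:\Vert k\Vert_\infty\le m\}$ by $1/\log(m+1)$ using the logarithmic weight in the definition of $F_d^{\log}$, deduce $N_{\mathcal B_d}^\infty(\delta,F_d^{\log})\le \exp(Cd\delta^{-1})$, and plug this into Proposition~\ref{thm:general} with $C_{\mathcal B_d}=1$ and $r=p$. The bookkeeping with $\tilde\eps$, the ceiling in $R$, and the absorption of constants is handled correctly, so nothing further is needed.
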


\begin{proof} %[Proof of Theorem~\ref{thm:main}]
For any $m\in\N$ and $f\in F_d^{\log}$, we have
\[
 \Vert f - P_{[-m,m]^d} f \Vert_\infty \,\le\, \log^{-1}(m+1)
\]
and thus
\[
 N_{\mathcal B_d}^\infty(\eps,F_d^{\log}) \,\le\, \exp(2d\eps^{-1}).
\]
Now the bound is obtained from Proposition~\ref{thm:general}.
%with $N_1(\eps,F^{s,d}_{\rm mix}) \le \lceil \eps^{-2} \rceil$.
\end{proof}

\vspace*{-1mm}

As a second example,
we consider Sobo\-lev spaces of mixed smoothness $s>1/2$
(also called unweighted Korobov spaces),
namely
\[
 H_{\rm mix}^{s,d} \,:=\, \bigg\{ f \in C([0,1]^d) \ \Big|\  \sum_{k\in\Z^d} |\hat f(k)|^2 
 \prod_{i=1}^d \max\{1,|k_i|^{2s}\} \le 1 \bigg\}.
\]
It is well known that $L_2$-approximation on $H_{\rm mix}^{s,d}$ suffers from the curse of dimensionality,
see, e.g., \cite{HW}.
The curse is relinquished in the presence of absolutely summable Fourier coefficients,
namely, for the class
\[
 F^{s,d}_{\rm mix} \,:=\, \bigg\{ f\in H_{\rm mix}^{s,d} \ \Big| \ \sum_{k\in\Z^d} |\hat f(k)| \le 1 \bigg\}.
\]

\begin{remark}[Weighted Korobov spaces]\label{rem:weighted}
A classical approach to relinquish 
the curse of dimensionality on $H_{\rm mix}^{s,d}$
is by introducing (product) weights $1\ge \gamma_1 \ge \gamma_2 \ge \hdots >0$
and
considering the class
\[
 H_{\rm mix}^{s,d,\gamma} \,:=\, \bigg\{ f \in C([0,1]^d) \mid  \sum_{k\in\Z^d} |\hat f(k)|^2 
 \prod_{i=1}^d \max\{1,\gamma_i^{-1} |k_i|^{2s}\} \le 1 \bigg\}.
\]
see, e.g., \cite{EP,KSS,NSW} and \cite[Ch.\,13]{DKP} and the references therein.
The sequence $\gamma$ models a decreasing importance of the variables.
%See, e.g., the book
It is known that $L_2$-approximation on $H_{\rm mix}^{s,d,\gamma}$
is polynomially tractable if the weights decay fast enough, namely, if
there is a constant $K>0$ with
\[
 \sum_{i=1}^d \gamma_i \,\le\, K \log(d), \quad d\ge 2,
% K_\gamma \,:=\, \sup_{d\in \N \setminus\{1\}}\, \frac{1}{\log d} \sum_{i=1}^d \gamma_i \,<\, \infty.
\]
see~\cite[Thm.\,1]{NSW}.
The present approach is more general:
For any $\gamma$ as above,
a simple computation shows the inclusion
\begin{equation}\label{eq:inclusion}
 H_{\rm mix}^{s,d,\gamma} \subset C\,d^q\, F^{s,d}_{\rm mix}
\end{equation}
with constants $C,q>0$ that only depend on $K$ and $s$.
\end{remark}

\begin{corollary}\label{cor:Sob}
For any $s>1/2$ and $1\le p<\infty$, 
the problem of $L_p$-approximation
is polynomially tractable on $F^{s,d}_{\rm mix}$.\\
More precisely, for $p\ge 2$,
there is a constant $C>0$,
depending only on $s$ and $p$,
such that for all $d\ge 2$ and $\varepsilon\in (0,1/2)$, we have
\[
 n(\varepsilon,F^{s,d}_{\rm mix},L_p) \,\le\, C\,d^2 \log(d)\, \varepsilon^{-p} \log^4(\varepsilon^{-1}).
\]
\end{corollary}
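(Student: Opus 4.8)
The plan is to invoke Proposition~\ref{thm:general} for the Fourier system $\mathcal B_d$, where $C_{\mathcal B_d}=1$. The defining condition $\sum_{k\in\Z^d}|\hat f(k)|\le1$ places $F^{s,d}_{\rm mix}$ inside the unit ball $\mathcal A_0(\mathcal B_d)$, so the hypotheses of that proposition are met, and the statement will follow once I bound $N_{\mathcal B_d}^\infty(\eps,F^{s,d}_{\rm mix})$, i.e.\ once I produce, for each $\eps>0$, an index set $\Lambda\subset\Z^d$ of controlled cardinality with $E_\Lambda^\infty(F^{s,d}_{\rm mix})\le\eps$. Feeding such a bound into Proposition~\ref{thm:general} with $r=\max\{p,2\}$ gives the quantitative estimate for $p\ge2$; the case $1\le p<2$ then follows from $p=2$ exactly as in the proof of that proposition, and the purely qualitative claim is also covered by Theorem~\ref{thm:general2}.

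For the projection error I would simply take the cube $\Lambda=\{k\in\Z^d:\Vert k\Vert_\infty\le m\}$. Writing $w(k)=\prod_{i=1}^d\max\{1,|k_i|^{2s}\}$, membership $f\in F^{s,d}_{\rm mix}$ gives $\sum_k|\hat f(k)|^2w(k)\le1$, and since $C_{\mathcal B_d}=1$ the Cauchy--Schwarz inequality yields
\[
 \Vert f-P_\Lambda f\Vert_\infty\le\sum_{\Vert k\Vert_\infty>m}|\hat f(k)|
 \le\Big(\sum_{\Vert k\Vert_\infty>m}|\hat f(k)|^2w(k)\Big)^{1/2}\Big(\sum_{\Vert k\Vert_\infty>m}w(k)^{-1}\Big)^{1/2}
 \le\Big(\sum_{\Vert k\Vert_\infty>m}w(k)^{-1}\Big)^{1/2}.
\]
I would estimate the remaining sum by a union bound over the $d$ coordinates, using $\sum_{\ell\in\Z}\max\{1,|\ell|^{-2s}\}=1+2\zeta(2s)=:\beta_s$ and $\sum_{\ell>m}\ell^{-2s}\le(2s-1)^{-1}m^{-(2s-1)}$ (here the assumption $s>1/2$ is what makes these quantities finite). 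This produces a bound $\Vert f-P_\Lambda f\Vert_\infty\le C_s\,\sqrt d\,\beta_s^{d/2}\,m^{-(s-1/2)}$, so choosing $m$ with $C_s\sqrt d\,\beta_s^{d/2}m^{-(s-1/2)}\le\eps$ forces only $\log m\lesssim_s d+\log(\eps^{-1})$, whence $\log N_{\mathcal B_d}^\infty(\eps,F^{s,d}_{\rm mix})\le d\log(2m+1)\lesssim_s d^2+d\log(\eps^{-1})$.

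Plugging this into Proposition~\ref{thm:general} (with $C_{\mathcal B_d}=1$ and $r=p$ for $p\ge2$) would give
\[
 n(\eps,F^{s,d}_{\rm mix},L_p)\,\lesssim_{s,p}\,\eps^{-p}\log^3(\eps^{-1})\big(d^2+d\log(\eps^{-1})\big)\,\le\,C\,d^2\log(d)\,\eps^{-p}\log^4(\eps^{-1})
\]
for $d\ge2$ and $\eps\in(0,1/2)$, which is the claim; the case $1\le p<2$ follows from $p=2$, and the same estimate of $N_{\mathcal B_d}^\infty$ also verifies the hypotheses of Theorem~\ref{thm:general2} (with $\beta=2$), giving polynomial tractability for every $1\le p<\infty$.

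There is no serious obstacle here: the whole content is the two elementary one-dimensional sums above, and the only point that needs a little care is the dimension-dependence of the constants, since the passage from $\log m$ to $\log\#\Lambda=d\log(2m+1)$ is exactly what converts the linear-in-$d$ bound for $\log m$ into the $d^2$ appearing in the exponent. If one wished to sharpen the power of $d$ in the final estimate --- for instance down to about $d\,\eps^{-p}\log^4(\eps^{-1})$ --- one would replace the cube by a hyperbolic cross $\{k\in\Z^d:\prod_i\max\{1,|k_i|\}\le N\}$ and invoke a cardinality bound for it that is uniform in $d$, but this refinement is not needed for the stated corollary.
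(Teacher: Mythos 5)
Your argument is essentially the paper's own proof: bound $\Vert f-P_{[-m,m]^d}f\Vert_\infty$ via Cauchy--Schwarz against the weight $\prod_i\max\{1,|k_i|\}^{2s}$, reduce the tail sum by a union bound over coordinates to a one-dimensional tail of order $m^{-(2s-1)}$, deduce $\log N_{\mathcal B_d}^\infty(\eps,F^{s,d}_{\rm mix})\lesssim_s d^2+d\log(\eps^{-1})$, and feed this into Proposition~\ref{thm:general} with $C_{\mathcal B_d}=1$ and $r=p$. The only blemish is the typo $\max\{1,|\ell|^{-2s}\}$, which should read $\max\{1,|\ell|\}^{-2s}$ (as your evaluation $1+2\zeta(2s)$ makes clear); otherwise the proposal is correct.
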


\begin{proof}
For $m\in\N$ and $f\in F^{s,d}_{\rm mix}$, we get from Hölder's inequality that
\begin{multline*}
 \Vert f - P_{[-m,m]^d} f \Vert_\infty^2 
 \,\le\, \bigg(\sum_{\Vert k \Vert_\infty > m} |\hat f(k)|\bigg)^2
 \,\le\, \sum_{\Vert k \Vert_\infty > m} \prod_{i=1}^d \max\{1,|k_i|\}^{-2s} \\
 \le\, d \cdot \sum_{\substack{k\in\Z^d \\ \vert k_1 \vert > m}} \prod_{i=1}^d \max\{1,|k_i|\}^{-2s}
  \,\le\, d \cdot c_s^{d-1} \sum_{\substack{k_1\in\Z \\ \vert k_1 \vert > m}} |k_1|^{-2s}
  \,\le\, d \cdot c_s^d \cdot m^{-2s+1}
\end{multline*}
with $c_s>0$ depending only on $s$,
and thus
\[
 N_{\mathcal B_d}^\infty(\eps,F^{s,d}_{\rm mix}) \,\le\, \big(d\, c_s^d\, \eps^{-2}\big)^{d/(2s-1)}.
\]
Now the bound is obtained from Proposition~\ref{thm:general}.
%with $N_1(\eps,F^{s,d}_{\rm mix}) \le \lceil \eps^{-2} \rceil$.
\end{proof}

\begin{remark}
In regard of \eqref{eq:inclusion}, we note that
\begin{equation}\label{eq:hom}
 n(\eps,r F_d, L_p) \,\le\, n(\eps/(2r), F_d, L_p)
\end{equation}
holds for any convex and symmetric class $F_d$
and any $r\ge 1$,
so that tractability on $F^{s,d}_{\rm mix}$ indeed
implies tractability on the classes $H_{\rm mix}^{s,d,\gamma}$.
Equation~\eqref{eq:hom} follows from the optimality of homogeneous algorithms
for linear problems, see \cite[Thm.\,1]{KK}.
\end{remark}

As a third example,
we consider the class of Hölder %continuous 
functions
that has been studied by Dick \cite{Dick} for numerical integration, namely,
\[
 F_d^\alpha \,:=\, \bigg\{ f\in C([0,1]^d) \ \Big|\ \sum_{k\in\Z^d} |\hat f(k)| \le 1 \, \land \, \sup_{x\ne y} \frac{|f(x)-f(y)|}{\Vert x - y\Vert_2^\alpha} \le 1 \bigg\}
\]
with some $0<\alpha \le 1$. Here, the supremum is taken over all $x,y\in\R^d$,
considering $f$ as a 1-periodic function on $\R^d$.

\begin{corollary}\label{cor:Hoelder}
For any $\alpha \in (0,1]$ and $1\le p<\infty$, 
the problem of $L_p$-approximation
is polynomially tractable on $F^{s,d}_{\rm mix}$.\\
More precisely, for $p\ge 2$,
there is a constant $c>0$, depending only on $\alpha$, 
such that for all $d\ge 2$ and $\varepsilon\in (0,1/2)$, we have
\[
 n(\varepsilon,F_d^\alpha,L_p) \,\le\, c\,d^2 \log^2(d)\, \varepsilon^{-p} \log^4(\varepsilon^{-1}).
\]
\end{corollary}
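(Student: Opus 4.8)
The plan is to apply Proposition~\ref{thm:general} with the Fourier system $\mathcal B_d$, for which $C_{\mathcal B_d}=1$, so that the only task is to bound the quantity $N_{\mathcal B_d}^\infty(\eps,F_d^\alpha)$, i.e., to estimate how many Fourier coefficients are needed to approximate a function $f\in F_d^\alpha$ in the uniform norm up to error $\eps$. As in the proofs of Corollaries~\ref{cor:Goda} and \ref{cor:Sob}, I would use the hypercube truncation $\Lambda=[-m,m]^d$ and control $\Vert f - P_{[-m,m]^d}f\Vert_\infty \le \sum_{\Vert k\Vert_\infty > m}|\hat f(k)|$.

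First I would exploit the Hölder condition to bound the high-frequency Fourier coefficients. For a $1$-periodic function on $\R^d$ that is $\alpha$-Hölder with constant $1$ in the $\ell_2$-norm, a standard argument (shifting by a half-period in a well-chosen coordinate direction) gives $|\hat f(k)| \lesssim_\alpha \Vert k\Vert_\infty^{-\alpha}$, or more precisely one obtains a factor of the form $|k_j|^{-\alpha}$ where $|k_j|=\Vert k\Vert_\infty$. This decay alone is not summable in high dimensions, so the key step is to combine it with the $\ell_1$-bound $\sum_k|\hat f(k)|\le 1$ via a simple interpolation: for $0<\theta<1$,
\[
 \sum_{\Vert k\Vert_\infty > m} |\hat f(k)| \,=\, \sum_{\Vert k\Vert_\infty > m} |\hat f(k)|^{1-\theta}\,|\hat f(k)|^{\theta}
 \,\le\, \Big(\sup_{\Vert k\Vert_\infty>m}|\hat f(k)|\Big)^{\theta}\sum_k |\hat f(k)|^{1-\theta},
\]
and then choose $\theta$ so small that $\sum_k|\hat f(k)|^{1-\theta}$ is finite. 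For this last sum I would again interpolate: $|\hat f(k)|^{1-\theta}\le |\hat f(k)|\cdot|\hat f(k)|^{-\theta} \lesssim_\alpha |\hat f(k)|\cdot\Vert k\Vert_\infty^{\alpha\theta}$ seems to go the wrong way, so instead I would bound $\sum_k|\hat f(k)|^{1-\theta}$ by splitting dyadically in $\Vert k\Vert_\infty$ and using $|\hat f(k)|\lesssim_\alpha \Vert k\Vert_\infty^{-\alpha}$ on each shell; the number of $k$ with $\Vert k\Vert_\infty\sim 2^j$ is $\lesssim (2^{j+1})^d$, and one needs $(1-\theta)\alpha > d$ for convergence, forcing $\theta = 1 - (d+1)/\alpha$ or similar — but that is negative for $d\ge\alpha$. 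Hence the cleaner route is: use the Hölder decay only to get the factor $m^{-\alpha\theta}$ up front (with $0<\theta\le 1$ fixed, say $\theta$ depending only on $\alpha$), and bound the remaining sum by the crude estimate $\sum_k|\hat f(k)|^{1-\theta}$ being controlled through the dyadic decomposition with a constant that is allowed to grow like $C^d$ in $d$ — exactly the regime Theorem~\ref{thm:general2} permits. Tracking constants, one arrives at
\[
 \Vert f - P_{[-m,m]^d}f\Vert_\infty \,\le\, C(\alpha)^d\, m^{-\delta}
\]
for some $\delta=\delta(\alpha)>0$ independent of $d$, and therefore
\[
 N_{\mathcal B_d}^\infty(\eps,F_d^\alpha) \,\le\, \big(C(\alpha)^d\,\eps^{-1}\big)^{(d/\delta)} \,\le\, \exp\!\big(c_2 d^2\log(d)\,\log(\eps^{-1})\big)
\]
after taking logarithms, which fits the hypotheses of Theorem~\ref{thm:general2} with $\alpha=0$ (the $C_{\mathcal B}$ exponent), $\beta=2$ up to the log factor, $\gamma$ arbitrarily small. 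Feeding $\log N_{\mathcal B_d}^\infty(\tilde\eps^{p/2},F_d^\alpha)\lesssim d^2\log d\,\log(\eps^{-1})$ into Proposition~\ref{thm:general} yields the claimed bound $c\,d^2\log^2(d)\,\eps^{-p}\log^4(\eps^{-1})$, where one extra power of $\log d$ appears from being slightly generous in the $N^\infty$ estimate.

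The main obstacle I anticipate is getting the Fourier decay estimate $|\hat f(k)|\lesssim_\alpha \Vert k\Vert_\infty^{-\alpha}$ with a constant that is genuinely independent of $d$ — the naive argument bounds $|\hat f(k)|$ by a modulus-of-continuity estimate involving a shift of size $\sim 1/|k_j|$ in the $j$-th coordinate, and in the $\ell_2$-Hölder norm this shift has length exactly $1/|k_j|$, so the constant is indeed dimension-free; one must just be careful that the periodization is consistent. The second delicate point is the bookkeeping of the $C^d$-type constant through the dyadic sum and then through $\log N^\infty$, to land on $d^2\log^2 d$ rather than something worse; this is where the statement's precise exponents come from, and it is routine but must be done carefully. (I also note that the corollary as displayed says ``polynomially tractable on $F^{s,d}_{\rm mix}$'' in its first sentence, which is surely a typo for $F_d^\alpha$.)
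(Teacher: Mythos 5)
Your overall strategy (apply Proposition~\ref{thm:general} with the Fourier system, so everything reduces to bounding $N_{\mathcal B_d}^\infty(\eps,F_d^\alpha)$) matches the paper, but the central estimate is not established, and this is a genuine gap, not a bookkeeping issue. You reduce to the coefficient tail $\sum_{\Vert k\Vert_\infty>m}|\hat f(k)|$ and try to beat it with the pointwise decay $|\hat f(k)|\lesssim_\alpha \Vert k\Vert_\infty^{-\alpha}$ via the splitting $\sum|\hat f(k)|^{\theta}\,|\hat f(k)|^{1-\theta}$. As you yourself compute, the dyadic count of $\asymp 2^{jd}$ frequencies per shell forces $(1-\theta)\alpha>d$, which is impossible for any fixed $\theta\in(0,1)$ once $d\ge 1\ge\alpha$; the fallback assertion that the dyadic sum is nevertheless ``controlled with a constant $C^d$'' is exactly the step that is missing, and it cannot be obtained from the two pieces of information you use: the unrestricted inequality $\sum_k|\hat f(k)|^{1-\theta}\le C^d$ is simply false on this class (spread the unit $\ell_1$-mass over $N$ low frequencies of a fixed smooth trigonometric polynomial scaled to have Hölder constant $\le 1$; the left side grows like $N^{\theta}$), and restricting to the tail does not help because the only tool you invoke there is the non-summable pointwise bound. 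So the claimed inequality $\Vert f-P_{[-m,m]^d}f\Vert_\infty\le C(\alpha)^d m^{-\delta}$, from which everything else in your argument flows, is an unsupported assertion. Moreover, even the very first reduction is lossy: for $\alpha\le 1/2$ the Hölder condition gives poor control of the $\ell_1$-tail of the Fourier coefficients (this is the classical Bernstein $\alpha>1/2$ threshold in $d=1$), so it is doubtful that any argument through $\sum_{\Vert k\Vert_\infty>m}|\hat f(k)|$ can give a dimension-free power of $m$.

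The paper avoids the coefficient tail entirely and bounds the projection error in $L_\infty$ directly, using the tensor-product structure $P_{[-m,m]^d}=S_m^{x_1}\cdots S_m^{x_d}$ and a telescoping over coordinates: for each $i$, the univariate section $x_i\mapsto f(x)$ lies in $C^\alpha_{\rm per}$, so Jackson--Lebesgue gives $\Vert f-S_m^{x_i}f\Vert_\infty\le c_2\log(m)\,m^{-\alpha}$, and the accumulated operator norms of the partial sums contribute Lebesgue constants $(c_1\log m)^{i-1}$. This yields $\Vert f-P_{[-m,m]^d}f\Vert_\infty\le (c_3\log m)^d m^{-\alpha}$, whence $\log N_{\mathcal B_d}^\infty(\eps,F_d^\alpha)\le c_5\,d^2\log^2(d)\log(\eps^{-1})$ (this is also where the second $\log d$ in the final bound genuinely comes from, not from ``being generous''); the $\ell_1$-condition on the Fourier coefficients is used only to place $F_d^\alpha$ inside $\mathcal A_0(\mathcal B_d)$ so that $\sigma_s(F_d^\alpha,\mathcal A)\le 1$ in Proposition~\ref{thm:general}. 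If you want to repair your write-up, replacing the tail-sum estimate by this telescoping argument (or proving your $C(\alpha)^d m^{-\delta}$ claim by some other rigorous means) is what is required. Your parenthetical remark about the typo $F^{s,d}_{\rm mix}$ versus $F_d^\alpha$ in the statement is correct.
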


\begin{proof}
Let $S_m = P_{[-m,m]}$ be the univariate Fourier sum operator
and $C^\alpha_{\rm per}$ be the class of univariate $\alpha$-Hölder continuous functions with Hölder constant one.
It is well known that
\[
 \Vert S_m f \Vert_\infty \,\le\, c_1 \log(m) \Vert f \Vert_\infty
\]
for all $m \ge 2$ and all $f\in L_\infty([0,1])$ and
\[
 \Vert f - S_m f \Vert_\infty \,\le\, c_2 \log(m) m^{-\alpha}
\]
for all $m \ge 2$ and all $f\in C^\alpha_{\rm per}$,
% $f\in C^\alpha_{\rm per}([0,1])$,
where $c_1,c_2>0$ depend on nothing but $\alpha$,
see \cite{Jackson}.
Let now $f\in F_d^\alpha$.
For fixed $x_1,\dots,x_{i-1},x_{i+1},\dots,x_d \in [0,1]$,
the function $f$ as a function of $x_i$ is in $C^\alpha_{\rm per}$.
By the triangle inequality,
\begin{align*}
 \Vert f - P_{[-m,m]^d} f \Vert_\infty \,\le\,
 \sum_{i=1}^d \Vert S_m^{x_1} \cdots S_m^{x_{i-1}} (f-S_m^{x_i}f) \Vert_\infty \\
 \le\, \sum_{i=1}^d (c_1 \log(m) )^{i-1} \cdot c_2 \log(m) m^{-\alpha}
 \,\le\, \frac{(c_3 \log(m) )^d}{m^{\alpha}}.
\end{align*}
This term is smaller than $\eps$ for $m \ge \exp(c_4 d \log^2(d) \log(\eps^{-1}))$,
%BEMERKUNG: Sogar Exponent 1 im d-Logarithmus sollte reichen, aber dann muss man genauer pruefen
thus
\[
 \log N_{\mathcal B_d}^\infty(\eps,F_d^\alpha) \,\le\, c_5 d^2 \log^2(d) \log(\eps^{-1}).
\]
Now the bound is obtained from Proposition~\ref{thm:general}.
\end{proof}

It is interesting to note that the polynomial order $d^2 \eps^{-2}$
that we obtained for $L_2$-approximation
is the same as the one obtained in \cite{Dick} for numerical integration.
It is surely an interesting open problem to 
improve the complexity bounds in Corollaries~\ref{cor:Goda},
\ref{cor:Sob} and \ref{cor:Hoelder}
and/or provide matching lower bounds.

\section{Intractability with linear algorithms}

We only have tractability for the class $F_d^{\log}$ thanks to \textbf{non-linear} algorithms.
If we restrict to linear algorithms, we still have the curse of dimensionality. % for those classes.
This is implied by a result of \cite{Glu}, see also \cite[Lem.\,3.2]{Vyb}.
Namely, it is known that for any $m,n \in \N$, $m>n$, 
and any linear mapping $T_n\colon \C^m \to \C^m$ of rank $n$,
there is some $x\in \C^m$ with $\Vert x \Vert_1 \le 1$ and
\begin{equation}\label{eq:Glus}
\Vert x - T_nx \Vert_2 \,\ge\, \left( \frac{m-n}{m} \right)^{1/2}.
\end{equation}

\begin{lemma}\label{thm:lower}
The problem of $L_2$-approximation on the classes $F_d^{\log}$ with linear algorithms suffers from the curse of dimensionality. For any linear mapping $A_n \colon F_d^{\log} \to L_2$ with rank $n\le 5^d/2$ we have
\[
 \err(A_n,F_d^{\log},L_2) \,\ge\, \frac{1}{\sqrt 2}.
\]
\end{lemma}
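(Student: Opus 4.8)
The plan is to embed a copy of the $\ell^1$-ball of $\C^m$ into $F_d^{\log}$ via a suitable finite set of frequencies, and then invoke the lower bound \eqref{eq:Glus} of Gluskin. Concretely, I would pick $m = 5^d$ and choose a family of frequencies $k^{(1)},\dots,k^{(m)} \in \Z^d$ all lying in $\{-2,-1,0,1,2\}^d$, say all of $\{-2,-1,0,1,2\}^d$ itself, so that $\|k^{(j)}\|_\infty \le 2$ and hence $\max(1,\log\|k^{(j)}\|_\infty) = \log 2 \le 1$ for every $j$. Then for any coefficient vector $x = (x_j)_{j=1}^m$ with $\|x\|_1 \le 1$, the function $f_x := \sum_{j=1}^m x_j\, e^{2\pi i \langle k^{(j)},\cdot\rangle}$ satisfies $\sum_{k} |\hat f_x(k)| \max(1,\log\|k\|_\infty) = \sum_j |x_j| \log\max(2, \cdot) \le \|x\|_1 \le 1$ (being slightly careful with the $k=0$ term, where the weight is $\max(1,\log 0\text{-convention}) = 1$ as in \eqref{eq:Fd1}), so $f_x \in F_d^{\log}$. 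Thus the map $x \mapsto f_x$ sends the unit $\ell^1$-ball of $\C^m$ into $F_d^{\log}$, and moreover it is an isometry onto its image in $L_2([0,1]^d)$: $\|f_x\|_2 = \|x\|_2$ by orthonormality of the exponentials, and more generally $\|f_x - f_y\|_2 = \|x - y\|_2$.

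Next I would transfer a linear recovery algorithm on $F_d^{\log}$ to a linear map on $\C^m$. Given a linear $A_n\colon F_d^{\log}\to L_2$ of rank $n$ built from sampling as in \eqref{eq:alg}, consider its restriction to the subspace $V := \Span\{e^{2\pi i\langle k^{(j)},\cdot\rangle}\}$; composing with the coordinate isometry $\C^m \cong V$ on the source and with the orthogonal projection onto $V$ followed by the coordinate isometry on the target, we obtain a linear map $T_n\colon \C^m\to\C^m$ of rank at most $n$. For $x$ in the $\ell^1$-ball we then have
\[
 \|x - T_n x\|_2 \,\le\, \|f_x - A_n(f_x)\|_2 \,\le\, \err(A_n, F_d^{\log}, L_2),
\]
where the first inequality uses that orthogonal projection onto $V$ does not increase the $L_2$-distance and that $x\mapsto f_x$ is an $L_2$-isometry. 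Now apply \eqref{eq:Glus}: since $n \le 5^d/2 = m/2$, there exists $x$ with $\|x\|_1\le 1$ and $\|x - T_n x\|_2 \ge ((m-n)/m)^{1/2} \ge (1/2)^{1/2}$, yielding $\err(A_n, F_d^{\log}, L_2) \ge 1/\sqrt 2$. Since $m = 5^d$ grows exponentially while the error stays bounded below by $1/\sqrt2$, this is precisely the curse of dimensionality.

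The only genuinely delicate point is bookkeeping the $F_d^{\log}$-norm of $f_x$ and, relatedly, whether one should exclude the zero frequency: with frequencies restricted to $\{-2,-1,0,1,2\}^d$ the weight $\max(1,\log\|k\|_\infty)$ is always $1$ (it equals $\max(1,\log 2) = 1$ for $k\neq 0$ with $\|k\|_\infty\le 2$, and $1$ for $k=0$), so the $F_d^{\log}$-constraint reduces exactly to $\|x\|_1\le 1$ and the embedding is clean. Everything else — the isometry property, the rank count, the reduction to \eqref{eq:Glus} — is routine. A cosmetic alternative is to use frequencies in $\{0,1,2,3,4\}^d$ to emphasize that one may even take nonnegative frequencies, but $\{-2,\dots,2\}^d$ keeps all weights equal to $1$ and is the most transparent choice.
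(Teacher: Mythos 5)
Your proposal is correct and follows essentially the same route as the paper: embed the $\ell^1$-ball via the frequency set $\{-2,\dots,2\}^d$ (where all weights $\max(1,\log\Vert k\Vert_\infty)$ equal $1$), conjugate $A_n$ with the coefficient isometry and the orthogonal projection onto $\mathcal T(\Lambda)$ to get a rank-$n$ map on $\C^{5^d}$, and apply Gluskin's bound \eqref{eq:Glus}. The only cosmetic slip is writing $\max(1,\log 2)=\log 2$ at one point, which you correct later, so the argument stands as is.
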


Note that the curse of dimensionality even holds for the class of all linear algorithms,
using arbitrary linear information, instead of only sampling based algorithms.

\begin{proof}
Let $A_n\colon F_d^{\log} \to L_2$ be a linear operator of rank $n\le 5^d/2$
and let $\Lambda=\{-2,-1,0,1,2\}^d$ so that $\#\Lambda=5^d$.
%Moreover, let 
%\[
%\mathcal A_d^* \,:=\, \left\{ f \in \mathcal T(I) \colon \sum_{k\in I} |\hat f(k)| \le 1 \right\}
%\subset F_d^{\log}.
%\]
Consider the linear mapping $T\colon \C^\Lambda \to \mathcal T(\Lambda)$
that maps a vector of coefficients to the corresponding trigonometric polynomial.
%and the orthogonal projection $P\colon L_2 \to \mathcal T(\Lambda)$.
% canoncial isometry $T\colon \ell_1(I) \to \mathcal A_d^*$.
Then $T_n=T^{-1} P_\Lambda A_n T\colon \C^\Lambda \to \C^\Lambda$ 
is a linear mapping with rank at most $n$ %from $\C^{I}$ to $\C^n$
and \eqref{eq:Glus}
yields that there is some $x\in \C^\Lambda$ with $\Vert x \Vert_1 \le 1$
such that %$T_n x=0$ and
%By \cite{Glu}, see also \cite[Lemma~3.2]{Vyb}, 
\[
% \Vert Tx \Vert_2 \,=\, 
\Vert Tx - P_\Lambda A_n T x \Vert_2
\,=\, \Vert x - T_n x\Vert_2 \,\ge\, \left( \frac{5^d-5^d/2}{5^d} \right)^{1/2} \,=\, \frac{1}{\sqrt 2}.
\]
The function $f=Tx$ is contained in $F_d^{\log}$.
Moreover, $f\in \mathcal T(\Lambda)$ 
%while $A_n(f) \in \mathcal T(\Lambda)^\perp$
and thus
\begin{equation}\label{eq:lower2}
 \Vert f - A_n f \Vert_2 \,\ge\,
 \Vert f - P_\Lambda A_n f \Vert_2 \,\ge\,  \frac{1}{\sqrt 2},
\end{equation}
as claimed.
\end{proof}

Clearly, the lower bound \eqref{eq:lower2}
also holds for the uniform instead of the $L_2$-norm.
Since linear algorithms are optimal 
for the recovery problem in the uniform norm, see \cite{CW},
Lemma~\ref{thm:lower} 
implies that the problem of uniform recovery on the class $F_d^{\log}$ is intractable.
Namely,
the following holds for all algorithms of the form
\begin{equation}\label{eq:alg-gen}
 A_n \colon F_d^{\log} \to L_\infty([0,1]^d), \quad A_n(f) = \phi(L_1(f),\hdots,L_n(f))
\end{equation}
where $L_1,\hdots,L_n\colon F_d^{\log} \to \C$
are linear functionals, possibly chosen adaptively,
and $\phi\colon \C^n \to L_\infty$.
This includes the sampling algorithms of the form \eqref{eq:alg}.

\begin{corollary}\label{cor:uniform}
The problem of (sampling) recovery in the uniform norm 
on the classes $F_d^{\log}$ 
suffers from the curse of dimensionality. 
For any mapping $A_n \colon F_d^{\log} \to L_\infty$ of the form \eqref{eq:alg-gen} 
with $n\le 5^d/2$ we have
\[
 \sup_{f\in F_d^{\log}} \Vert f - A_n(f) \Vert_\infty \,\ge\, \frac{1}{\sqrt 2}.
\]
\end{corollary}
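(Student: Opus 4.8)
The plan is to reduce the uniform-recovery statement to Lemma~\ref{thm:lower} by exploiting the optimality of linear algorithms for recovery in $L_\infty$. First I would recall the general algorithm \eqref{eq:alg-gen} using linear (possibly adaptive) information $L_1,\dots,L_n$ and an arbitrary reconstruction map $\phi\colon\C^n\to L_\infty$; this class of algorithms contains the sampling algorithms \eqref{eq:alg} as a special case, so it suffices to prove the lower bound for \eqref{eq:alg-gen}. The key structural fact I would invoke is the result of~\cite{CW}: for the problem of approximation in the sup-norm on a convex symmetric class, adaptive nonlinear algorithms using $n$ pieces of linear information are (up to a constant, or even exactly, depending on the precise formulation) no more powerful than linear algorithms using $n$ pieces of linear information. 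Since $F_d^{\log}$ is convex and symmetric (balanced), this applies.

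The next step is to combine this with Lemma~\ref{thm:lower}. That lemma, and in particular the displayed lower bound \eqref{eq:lower2}, was proved with the $L_2$-norm on the left-hand side, but the argument goes through verbatim with the $L_\infty$-norm: the function $f=Tx$ lies in $\mathcal T(\Lambda)$, so $f-P_\Lambda A_n f$ lies in $\mathcal T(\Lambda)$ as well, and on this finite-dimensional trigonometric space one has $\Vert h\Vert_\infty\ge\Vert h\Vert_2$, hence $\Vert f-A_n f\Vert_\infty\ge\Vert f-P_\Lambda A_n f\Vert_\infty\ge\Vert f-P_\Lambda A_n f\Vert_2\ge 1/\sqrt2$. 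Thus every \emph{linear} algorithm $A_n$ of rank $n\le 5^d/2$ already satisfies $\sup_{f\in F_d^{\log}}\Vert f-A_n(f)\Vert_\infty\ge 1/\sqrt2$. Feeding this lower bound for linear algorithms into the Creutzig--Wojtaszczyk equivalence then transfers it to arbitrary algorithms of the form \eqref{eq:alg-gen}, which is exactly the claimed bound; since sampling algorithms are of this form, the curse of dimensionality for uniform sampling recovery on $F_d^{\log}$ follows.

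The main obstacle I anticipate is bookkeeping around the precise statement of~\cite{CW}: one must check that the version of "linear algorithms are optimal for $L_\infty$-recovery" being cited applies to the class $F_d^{\log}$ (convexity and symmetry suffice, and these are immediate from the definition \eqref{eq:Fd1}, since $\sum_k|\hat f(k)|\max(1,\log\Vert k\Vert_\infty)$ is a norm) and with the correct constant, namely that it does not degrade the constant $1/\sqrt2$ — or, if it does degrade it by a universal factor, to note that this is harmless for an intractability (curse-of-dimensionality) statement, where only the exponential dependence $n\le c\,5^d$ matters. A secondary, purely cosmetic point is adaptivity: the cited optimality result should already cover adaptive information, so no separate adaption-is-useless argument is needed, but I would state this explicitly. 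Everything else is a direct quotation of \eqref{eq:lower2} together with the elementary inequality $\Vert h\Vert_\infty\ge\Vert h\Vert_2$ on $[0,1]^d$.
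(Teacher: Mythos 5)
Your proposal is essentially the paper's own argument: extend the lower bound \eqref{eq:lower2} for linear algorithms from the $L_2$- to the uniform norm, then invoke the Creutzig--Wojtaszczyk result \cite{CW} that linear algorithms are optimal for recovery in $L_\infty$ (which indeed loses no constant, and covers adaptive information) to transfer the bound to all algorithms of the form \eqref{eq:alg-gen}. One intermediate inequality in your chain is not valid as written: $\Vert f-A_nf\Vert_\infty\ge\Vert f-P_\Lambda A_nf\Vert_\infty$ would require $P_\Lambda$ to be an $L_\infty$-contraction, which fails for the Fourier projection (its norm is the Lebesgue constant $>1$); but the conclusion is immediate from the correct chain $\Vert f-A_nf\Vert_\infty\ge\Vert f-A_nf\Vert_2\ge\Vert f-P_\Lambda A_nf\Vert_2\ge 1/\sqrt2$, using that $P_\Lambda$ is an orthogonal projection in $L_2$ fixing $f\in\mathcal T(\Lambda)$, so the slip is harmless.
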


In a similar way, one can show that polynomial tractability 
cannot be achieved with linear algorithms
for the classes $F^{s,d}_{\rm mix}$ and $F_d^\alpha$ from Section~\ref{sec:UBFourier}.
We omit the details.

\medskip

\noindent
\textbf{Acknowledgement.}
This research was funded by the Austrian Science Fund (FWF) Project M~3212-N. 
For the purpose of open access, I have applied a CC BY public copyright licence to any author accepted manuscript version arising from this submission.
I thank Erich Novak, Mario Ullrich, 
Tino Ullrich and a referee for helpful comments.

\end{document}